\newtheorem{theorem}{Theorem}[section]
\newtheorem{lemma}[theorem]{Lemma}
\numberwithin{equation}{section}
\title{
New curvature characterization for 
the weighted Sasaki
sphere}
\author{Pak Tung Ho}
\address{Department of Mathematics, Tamkang University, Tamsui, New Taipei City 251301, Taiwan}
\email{paktungho@yahoo.com.hk}
\author{Kuang-Ru Wu}
\address{Erdős Center, HUN-REN Rényi Institute, Reáltanoda utca 14, H-1053, Budapest, Hungary}
\email{wuuuruuu@gmail.com}
\newcommand{\RN}[1]{%
  \textup{\uppercase\expandafter{\romannumeral#1}}%
}
\begin{document}

\date{21st of August, 2025}

\parskip=6pt

\begin{abstract}
In this paper, we use the shifted cones
introduced by Yang and Zhang  to classify Sasaki manifolds.
This gives a  new curvature characterization for 
the weighted Sasaki
sphere.

\end{abstract}

\maketitle

\section{Introduction}

The study of the relationship between curvature and topology
of a manifold is one of the most important subjects in Riemannian geometry.
One of the examples 
is the Bonnet–Myers theorem, which says that 
a complete  Riemannian manifold 
such that its Ricci curvature bounded 
below by a positive constant 
must be compact. 
Another example 
is the so-called sphere theorem, 
which roughly says that 
a manifold must be the standard sphere 
under certain curvature condition. 
Here we mention some results of this type:
In \cite{MarioMoore},  
Micallef and Moore showed
that a compact simply connected $n$-dimensional Riemannian manifold which has positive isotropic curvature, where $n\geq 4$, 
must be homeomorphic to a sphere. 
Using Ricci flow, Hamilton proved in \cite{Hamiton}
that
a compact $3$-dimensional manifold which admits a Riemannian metric with strictly positive Ricci curvature must be diffeomorphic to 
the $3$-dimensional standard sphere. 
Still using Ricci flow, 
Brendle and Schoen proved in \cite{BrendleSchoen}
the differentiable sphere theorem, 
which says that 
a compact Riemannian manifold of dimension $n\geq 4$ with $1/4$-pinched sectional curvature
admits a metric of constant  curvature and therefore is diffeomorphic to a spherical space form. 

For K\"ahler manifolds, we have the Frankel conjecture, which asserts that 
 a compact K\"ahler manifold with positive bisectional curvature  must be biholomorphic to complex projective space.
The Frankel conjecture was proved by 
Siu and Yau in \cite{SiuYau}
and independently by Mori
in \cite{Mori}.
Later, the following generalized Frankel conjecture 
was proved in \cite{Mok}
by Mok:
Suppose that $M$ is a compact K\"ahler manifold of nonnegative  bisectional curvature. Then the universal cover of $M$, with its natural metric, is biholomorphically isometric to the metric product of complex Euclidean space, with some number of irreducible closed hermitian symmetric spaces with rank larger than one, with the product of some number of complex projective spaces, each of which has a K\"ahler metric of nonnegative  bisectional curvature.

Very recently, Yang and Zhang 
introduced in \cite{YangZhang} a new positivity notion for curvature of
Riemannian manifolds and obtain characterizations for spherical space forms and
the complex projective space $\mathbb{CP}^n$.
To state their results, 
we introduce the following notations: 
Let $\Lambda=(a_1,...,a_N)\in\mathbb{R}^N$
and $\sigma_k$ be the $k$-th 
elementary symmetric function
$$\sigma_k(\Lambda)
=\sum_{1\leq i_1<\cdots<i_k\leq N}
a_{i_1}\cdots a_{i_k}.$$
The $\Gamma_k^+$ cone
is defined as
$$\Gamma_k^+:=
\{\Lambda\in\mathbb{R}^N: 
\sigma_j(\Lambda)>0\mbox{ for all }j\leq k\}.$$
Let $\overline{\Gamma}^+_k$
be the closure of $\Gamma_k^+$. 
Hence, $\Lambda\in \overline{\Gamma}^+_k$
if and only if $\sigma_j(\Lambda)\geq 0$ 
for all $j\leq k$. 
It is clear that 
$\Gamma_N^+\subset\cdots\subset \Gamma_1^+$
and $\Lambda=(a_1,...,a_N)\in \Gamma_N^+$
if and only if $a_i>0$ for all $i$. 
In \cite{YangZhang}, 
Yang and Zhang studied 
a version of shifted cone on 
Riemannian manifolds and K\"ahler manifolds. 
In particular, we mention the following 
theorems related 
to our main results. 

Let $(M,g)$ be a K\"ahler manifold of complex dimension $n$. It is well-known that
by the $J$-invariant property, the curvature operator of the underlying Riemannian
manifold $(M,g)$ vanishes on the orthogonal complement of the holonomy algebra
$\mathfrak{u}(n)\subset \mathfrak{so}(2n)$. Hence, the Riemannian curvature operator of a K\"ahler manifold has
a kernel of real dimension at least $n(n-1)$. We consider the K\"ahler curvature
operator $R:=R|_{\mathfrak{u}(n)}$. Let
$$\rho_1\leq\cdots\leq\rho_{n^2}$$
be the real eigenvalues
of the K\"ahler 
curvature operator. 
Let 
$\Lambda=(\rho_1,\cdots,\rho_{n^2})$
and $T=\rho_1+\cdots+\rho_{n^2}$. 
For $k\in \{1,2,...,n^2-1\}$, 
we define 
\begin{equation}
\beta_k:=\frac{1}{n^2}
-\frac{1}{n^2}
\sqrt{\frac{k}{(n^2-1)(n^2-k)}}    
\end{equation}
and the shifted point 
\begin{equation}
\Lambda_{\beta_k}:=\Lambda-\beta_k
(T,\cdots,T)\in\mathbb{R}^{n^2}.
\end{equation}
We say $g\in \Gamma_2^+(\beta_k)$
if $\Lambda_{\beta_k}\in\Gamma_2^+$
for each point $x\in M$. 
The following two theorems were proved
by Yang and Zhang. 

\begin{theorem}[Theorem 1.5 in \cite{YangZhang}]
\label{YZthm5}
Let $(M, g)$ be a compact K\"ahler manifold with complex dimension $n\geq 2$.
If $g\in \Gamma_2^+(\beta_2)$, 
then $M$  is biholomorphic to $\mathbb{CP}^n$. \end{theorem}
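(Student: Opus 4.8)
The plan is to convert the algebraic hypothesis $g\in\Gamma_2^+(\beta_2)$ into a pinching condition for the Kähler curvature operator and then feed that pinching into the resolution of the Frankel conjecture. \emph{Step 1 (unwinding the cone condition).} Write $N=n^2$. The condition $\Lambda_{\beta_2}\in\Gamma_2^+$ means $\sigma_1(\Lambda_{\beta_2})>0$ and $\sigma_2(\Lambda_{\beta_2})>0$. Since $\sigma_1(\Lambda_{\beta_2})=T(1-N\beta_2)$ and, by the identity $\sigma_2(\mu)=\tfrac12\big(\sigma_1(\mu)^2-\sum_i\mu_i^2\big)$,
\[
\sigma_2(\Lambda_{\beta_2})=\tfrac12\Big(\big(1-2(N-1)\beta_2+N(N-1)\beta_2^2\big)T^2-\sum_i\rho_i^2\Big),
\]
a direct computation with the explicit value of $\beta_2$ gives $1-N\beta_2=\sqrt{2/((N-1)(N-2))}>0$ and $1-2(N-1)\beta_2+N(N-1)\beta_2^2=\tfrac{1}{N-2}$. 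Hence $g\in\Gamma_2^+(\beta_2)$ is pointwise equivalent to
\[
T>0\qquad\text{and}\qquad \sum_{i=1}^{n^2}\rho_i^2<\frac{T^2}{n^2-2}.
\]
In particular the scalar curvature is positive and the Kähler curvature operator is $L^2$-pinched toward its trace part.

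\emph{Step 2 (reduction to $n=2$).} Decompose an algebraic Kähler curvature operator orthogonally under $U(n)$ as $\mathcal R=\mathcal R_{\mathrm{FS}}\oplus\mathcal R'$, with $\mathcal R_{\mathrm{FS}}$ on the one-dimensional Fubini–Study ray (the constant holomorphic sectional curvature part) and $\mathcal R'$ the sum of the trace-free Ricci part and the Bochner part. Since the only $U(n)$-invariant linear functional on curvature tensors is the trace, $\mathcal R_{\mathrm{FS}}$ is the multiple of the Fubini–Study model with $\mathrm{tr}\,\mathcal R_{\mathrm{FS}}=T$, so $\|\mathcal R_{\mathrm{FS}}\|^2=\tfrac{2}{n(n+1)}T^2$ and $\sum_i\rho_i^2=\tfrac{2}{n(n+1)}T^2+\|\mathcal R'\|^2$. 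The hypothesis therefore reads $T>0$ and $\|\mathcal R'\|^2<\big(\tfrac{1}{n^2-2}-\tfrac{2}{n(n+1)}\big)T^2$. The coefficient is negative for $n\ge 3$, so the hypothesis is then empty and there is nothing to prove; for $n=2$ it becomes $T>0$ and $\|\mathcal R'\|^2<\tfrac{T^2}{6}$, i.e.\ the curvature operator is pinched close to the Fubini–Study model.

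\emph{Step 3 ($n=2$: from pinching to $\mathbb{CP}^2$).} I would show this pinching forces positive holomorphic bisectional curvature and then apply the Frankel conjecture (Siu–Yau \cite{SiuYau}, Mori \cite{Mori}). The first Bianchi identity is essential here: using $H(X,Y)=\langle \mathcal R\,\phi_X,\phi_Y\rangle$ with $\phi_X=X\wedge JX\in\mathfrak u(n)$ a unit vector and $\langle\phi_X,\phi_Y\rangle=|\langle X,Y\rangle|^2$, and splitting $\mathcal R'$ into its trace-free Ricci and Bochner components (each a universal multiple of $\|\mathcal R'\|$ in norm), one estimates $H(X,Y)$ from below by $\tfrac{T}{6}|\langle X,Y\rangle|^2$ minus an error that the Bianchi-constrained structure of algebraic Kähler curvature tensors forces to be strictly smaller — equivalently, the $L^2$-ball $\{\|\mathcal R'\|^2<T^2/6\}$ about $\mathcal R_{\mathrm{FS}}$ lies inside the convex cone of curvature tensors of positive bisectional curvature. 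Granting this, $(M,g)$ has positive holomorphic bisectional curvature and is biholomorphic to $\mathbb{CP}^n$; if one instead obtains only positive holomorphic \emph{orthogonal} bisectional curvature together with positive Ricci curvature, one invokes Mok's theorem \cite{Mok}.

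\emph{Main obstacle.} Step 3 is the heart of the matter. The bare inequality $\sum_i\rho_i^2<T^2/(n^2-2)$ controls only $\mathrm{tr}\,\mathcal R$ and $\mathrm{tr}\,\mathcal R^2$, which by themselves do not pin down the signs of the individual bisectional curvatures $\langle\mathcal R\,\phi_X,\phi_Y\rangle$; the argument can succeed only because $\mathcal R$ is not an arbitrary symmetric operator on $\mathfrak u(n)$ but an algebraic Kähler curvature tensor, so the full first Bianchi identity must be exploited to localize $\mathcal R$ inside the positive-bisectional-curvature cone. A clean alternative that bypasses pointwise positivity is to prove that the cone $\Gamma_2^+(\beta_2)$ is invariant under the Hamilton ODE attached to the normalized Kähler–Ricci flow, so that by the maximum principle the flow starting at $g$ stays pinched and converges to a Kähler–Einstein metric whose curvature lies on the Fubini–Study ray, exhibiting $M$ as $\mathbb{CP}^n$; in that approach the obstacle is transferred to verifying the ODE-invariance of the cone (and ruling out convergence to the boundary model $\mathbb{CP}^1\times\mathbb{CP}^1$) together with the long-time convergence.
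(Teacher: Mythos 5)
This is a cited result (Theorem 1.5 of [YangZhang]); the paper does not reprove it, but its proof of Theorem \ref{thm 1} shows the intended route: $g\in\Gamma_2^+(\beta_2)$ forces $\rho_1+\rho_2>0$ (Lemma \ref{lemma}, i.e.\ Theorem 2.7 of [YangZhang]), $2$-positivity of the K\"ahler curvature operator gives positive orthogonal bisectional curvature (Lemma 2.10 of [YangZhang]: for $Y\perp_{\mathbb C}X$ the bisectional curvature $R(X,JX,Y,JY)$ is $\tfrac12\bigl(\langle\mathcal R u,u\rangle+\langle\mathcal R v,v\rangle\bigr)$ for an orthonormal pair $u,v\in\mathfrak u(n)$, hence $\ge\tfrac12(\rho_1+\rho_2)$), and positive orthogonal bisectional curvature on a compact K\"ahler manifold yields $\mathbb{CP}^n$ by Chen and Gu--Zhang. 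Your Step 1 is correct, and from it $2$-positivity follows in two lines: if $\rho_1+\rho_2\le0$ then Cauchy--Schwarz on $\rho_3,\dots,\rho_{n^2}$ gives $\sum_i\rho_i^2\ge(T-\rho_1-\rho_2)^2/(n^2-2)\ge T^2/(n^2-2)$, contradicting your reformulation. So you are two short steps away from a complete proof, but you take a different and harder road.

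The genuine gap is Step 3. The assertion that the ball $\{\|\mathcal R'\|^2<T^2/6\}$ around the Fubini--Study ray lies inside the cone of curvature operators with positive bisectional curvature is exactly the content of the theorem in dimension two, and you do not prove it: the phrases ``one estimates \dots minus an error that the Bianchi-constrained structure \dots forces to be strictly smaller'' and ``Granting this'' stand in for the entire argument. (The bound is sharp: the product $\mathbb{CP}^1\times\mathbb{CP}^1$ sits exactly on the boundary $\|\mathcal R'\|^2=T^2/6$ with vanishing mixed bisectional curvature, so any proof must exploit the Bianchi identity quantitatively; nothing in your write-up does so.) The alternative Ricci-flow route is likewise only named, not carried out. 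A secondary inaccuracy: Mok's theorem concerns nonnegative \emph{bisectional} curvature and does not apply to a manifold known only to have positive \emph{orthogonal} bisectional curvature; the correct references there are Chen and Gu--Zhang.

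Separately, your Step 2 deserves attention rather than dismissal. Your computation that every algebraic K\"ahler curvature operator satisfies $\sum_i\rho_i^2\ge\frac{2}{n(n+1)}T^2$ (with equality only on the Fubini--Study ray), combined with $\frac{2}{n(n+1)}>\frac{1}{n^2-2}$ for $n\ge3$, checks out and would make the hypothesis $g\in\Gamma_2^+(\beta_2)$, as defined in this paper, empty for $n\ge3$ --- even the Fubini--Study metric on $\mathbb{CP}^3$ fails it. If correct, the theorem (and its Sasaki analogues here) is substantive only in complex dimension $2$; this should be checked against the original definitions in [YangZhang] before being relied upon, but it is not a defect of your argument.
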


\begin{theorem}[Theorem 1.6 in \cite{YangZhang}]
\label{YZthm6}
Let $(M,g)$
be a compact K\"ahler manifold with complex dimension $n\geq 2$.
If $g\in \overline{\Gamma}_2^+(\beta_2)$,
then one of the following holds:
\begin{enumerate}
    \item $M$ is biholomorphic
    to $\mathbb{CP}^1\times\mathbb{CP}^1$;
    
    \item  $(M,g)$ is a flat torus;

      \item $M$ is biholomorphic to $\mathbb{CP}^n$.
\end{enumerate}    
\end{theorem}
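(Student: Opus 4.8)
The plan is to convert the hypothesis into a pointwise pinching inequality for the Kähler curvature operator, dispose of the strict case with Theorem~\ref{YZthm5}, and then analyze the remaining (boundary) case via the structure theory of Kähler manifolds of nonnegative bisectional curvature. First I would record the identities
\[
\sigma_1(\Lambda_{\beta_2})=(1-n^2\beta_2)\,T=\sqrt{\tfrac{2}{(n^2-1)(n^2-2)}}\;T,
\qquad
\sigma_2(\Lambda_{\beta_2})=\tfrac12\Bigl(\tfrac{T^{2}}{n^{2}-2}-\sum_{i=1}^{n^{2}}\rho_i^{2}\Bigr),
\]
the first being immediate and the second following from $\sum_i(\rho_i-\beta_2T)^2=\sum_i\rho_i^2-2\beta_2T^2+n^2\beta_2^2T^2$ together with the elementary evaluation $1-2(n^2-1)\beta_2+n^2(n^2-1)\beta_2^{2}=\tfrac1{n^2-2}$. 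Thus $g\in\overline{\Gamma}_2^+(\beta_2)$ is equivalent to the pointwise bounds $T\ge 0$ and $\sum_i\rho_i^{2}\le T^{2}/(n^{2}-2)$; writing $\mathcal R_0$ for the trace-free part of the Kähler curvature operator, the latter reads $\|\mathcal R_0\|^{2}\le \tfrac{2T^{2}}{n^{2}(n^{2}-2)}$, i.e.\ $\mathcal R$ is pinched toward the constant operator $\tfrac{T}{n^{2}}\,\mathrm{Id}$. If both inequalities are strict everywhere then $g\in\Gamma_2^+(\beta_2)$ and Theorem~\ref{YZthm5} gives conclusion~(3), so from now on I assume equality is attained somewhere.

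If $T\equiv 0$ the pinching forces $\sum_i\rho_i^{2}\equiv0$, hence $\mathcal R\equiv0$ and $(M,g)$ is flat; since a compact flat Kähler manifold is, up to a finite unramified cover, a flat complex torus, this is conclusion~(2). So assume $T>0$ somewhere. The crucial algebraic step is then that the pinching $\|\mathcal R_0\|^{2}\le \tfrac{2T^{2}}{n^{2}(n^{2}-2)}$ forces the holomorphic bisectional curvature to be nonnegative: writing, for unit $(1,0)$-vectors $X,Y$, $R(X,\bar X,Y,\bar Y)=\langle\mathcal R\,E_{X\bar X},E_{Y\bar Y}\rangle$ in a unitary frame, where $E_{X\bar X},E_{Y\bar Y}$ are the associated rank-one Hermitian endomorphisms with $\langle E_{X\bar X},E_{Y\bar Y}\rangle\ge0$, this curvature differs from the nonnegative quantity $\tfrac{T}{n^{2}}\langle E_{X\bar X},E_{Y\bar Y}\rangle$ by an $\mathcal R_0$-term that one estimates from below using $\|\mathcal R_0\|$ \emph{together with} the first Bianchi identity for a Kähler curvature tensor (a naive operator-norm bound being too weak). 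Granting this, Mok's theorem applies and the universal cover $\widetilde M$ is the metric product $\mathbb C^{n_0}\times N_1\times\cdots\times N_l$, each $N_j$ biholomorphically isometric to an irreducible compact Hermitian symmetric space of rank $\ge2$ or biholomorphic to $\mathbb{CP}^{m_j}$ with a Kähler metric of nonnegative bisectional curvature.

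I would then feed the pinching back into this splitting. For a Kähler product the curvature operator is block-diagonal with the flat block and all cross-blocks zero, so with $T_j$ the trace of the curvature operator of $N_j$ one has $T=\sum_jT_j$, and since that operator has $m_j^{2}$ eigenvalues, $\sum_i\rho_i^{2}=\sum_j\sum_i(\rho_i^{(j)})^{2}\ge\sum_jT_j^{2}/m_j^{2}\ge T^{2}\big/\sum_jm_j^{2}$ by Cauchy--Schwarz. Combined with $\sum_i\rho_i^{2}\le T^{2}/(n^{2}-2)$ and $n=n_0+\sum_jm_j$, this yields $\sum_jm_j^{2}\ge\bigl(n_0+\sum_jm_j\bigr)^{2}-2$, which forces $n_0=0$ and either $l=1$ or ($l=2$ with $m_1=m_2=1$ and $T_1=T_2$). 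In the second case $\widetilde M=\mathbb{CP}^1\times\mathbb{CP}^1$ (which sits exactly on the boundary $\sum_i\rho_i^{2}=T^{2}/(n^{2}-2)$); since $\pi_1(M)$ is finite ($\widetilde M$ being compact) and no nontrivial finite group acts freely and holomorphically on $\mathbb{CP}^1\times\mathbb{CP}^1$, we obtain $M\cong\mathbb{CP}^1\times\mathbb{CP}^1$, conclusion~(1).

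In the remaining case $\widetilde M=N_1$ has complex dimension $n$. If $N_1\cong\mathbb{CP}^n$ then $M\cong\mathbb{CP}^n$ (again $\mathbb{CP}^n$ admits no free holomorphic action of a nontrivial finite group), conclusion~(3). If $N_1$ is an irreducible compact Hermitian symmetric space of rank $\ge2$, it is homogeneous, so $\sigma_2(\Lambda_{\beta_2})$ is constant; since $\sigma_1(\Lambda_{\beta_2})>0$ there (such a space has positive scalar curvature), if $\sigma_2(\Lambda_{\beta_2})$ were positive then $g\in\Gamma_2^+(\beta_2)$ and Theorem~\ref{YZthm5} would force $N_1$ to be biholomorphic to $\mathbb{CP}^n$, which it is not; hence $N_1$ lies on the boundary $\sum_i\rho_i^{2}=T^{2}/(n^{2}-2)$, and a direct inspection of the curvature operators of the irreducible Hermitian symmetric spaces (Grassmannians, the quadrics $Q^{n}$ with $n\ge3$, $SO(2m)/U(m)$, $Sp(m)/U(m)$, and the two exceptional spaces) shows that none of them satisfies this, ruling out the case. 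This finishes the argument. The main obstacle is the passage, in the second paragraph, from the cone inequality to nonnegativity of the (orthogonal) bisectional curvature: the crude estimate $\lvert R(X,\bar X,Y,\bar Y)\rvert\le\|\mathcal R_0\|$ is not good enough, and exploiting the symmetries of a Kähler curvature tensor is precisely where the particular exponent $\beta_2$ (rather than a larger shift) enters. A secondary point is the elimination of the rank-$\ge2$ symmetric spaces, which as presented relies on a case check and which one would prefer to replace by a uniform argument, for instance by showing that the pinching forces strictly positive orthogonal bisectional curvature unless a flat de Rham factor or a $\mathbb{CP}^1\times\mathbb{CP}^1$ splitting is present.
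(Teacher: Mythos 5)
First, note that the paper does not prove this statement itself: Theorem \ref{YZthm6} is imported verbatim from Yang--Zhang, and what the paper actually proves is its transverse analogue, Theorem \ref{thm 2}, whose proof displays the intended strategy. Your reduction of $g\in\overline{\Gamma}_2^+(\beta_2)$ to the pinching $T\ge 0$, $\sum_i\rho_i^2\le T^2/(n^2-2)$ is correct (I checked the constant), and your Cauchy--Schwarz count $\sum_j m_j^2\ge n^2-2$ in the splitting is a legitimate variant of the paper's count of nonzero eigenvalues. But the proof has a genuine gap exactly where you flag it, and flagging it does not close it: you need nonnegative bisectional curvature at \emph{every} point to invoke Mok, and you give no argument for it. The difficulty is real, not just technical: at a point where the shifted condition is strict, the operator is merely $2$-positive, and a $2$-positive Kähler curvature operator can have $\rho_1<0$, so there is no reason the holomorphic sectional curvature (hence the full bisectional curvature) is nonnegative there; a norm bound on the trace-free part plus an appeal to ``the first Bianchi identity'' is a hope, not an estimate. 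The route actually used (Lemma \ref{lemma}, i.e.\ Theorem 2.7 of Yang--Zhang) avoids this entirely: the closed cone condition forces, at each point, either $\rho_1+\rho_2>0$ or spectrum $(0,0,c,\dots,c)$ with $c\ge 0$. This yields three \emph{global} cases. In the case where the operator is $2$-nonnegative everywhere and $2$-positive somewhere, one never passes through Mok at all: one runs the Kähler--Ricci flow, uses the strong maximum principle of Ni--Tam and Ni--Wu to make the operator $2$-positive everywhere for $t>0$, gets positive \emph{orthogonal} bisectional curvature, and lands in conclusion (3). Mok's theorem is invoked only in the remaining case, where the spectrum is $(0,0,c,\dots,c)$ everywhere, so the operator is nonnegative and nonnegative bisectional curvature is immediate. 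Your proof is missing both the dichotomy and the flow argument, and without them the bridge to Mok does not exist.

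A secondary, smaller gap: your elimination of the irreducible Hermitian symmetric spaces of rank $\ge 2$ is announced as ``a direct inspection'' that is never performed, and the homogeneity argument preceding it only shows such a space would have to sit on the boundary of the pinching, which by itself rules out nothing. With the dichotomy in hand this step is a one-line kernel count: at a point where $c>0$ the Kähler curvature operator has kernel of dimension exactly $2$, whereas for an irreducible Hermitian symmetric space of rank $\ge 2$ the operator vanishes on the orthogonal complement of the holonomy algebra in $\mathfrak{u}(n)$, which has dimension strictly greater than $2$ (e.g.\ $(n+2)(n-1)/2$ for the quadric). Working only from the pinching inequality, as you do, you would have to verify $\sum_i\rho_i^2>T^2/(n^2-2)$ space by space, which you have not done. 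Finally, a cosmetic point: in the $T\equiv 0$ case a compact flat Kähler manifold is only finitely covered by a torus (bielliptic surfaces exist), so your honest ``up to a finite cover'' does not literally match conclusion (2) as stated; this imprecision is inherited from the statement itself and is not your error.
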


Sasaki geometry is an odd dimensional companion of K\"ahler geometry. 
The counterpart of the Frankel conjecture
for Sasaki manifolds has been proved by 
He and Sun in \cite{HeSunAdv}
(see Theorem \ref{thm He--sun positive}
below), 
while the counterpart of the generalized Frankel conjecture  for Sasaki manifolds
has been proved by He and Sun in \cite{HeSunIMRN}
(see Theorem \ref{thm He--Sun nonnegative}
below).

The aim of the paper is to use the shifted cones introduced in \cite{YangZhang} to classify Sasaki manifolds. Let $(M,g)$ be a Sasaki manifold of dimension $2m+1$. The metric $g$ induces a transverse K\"ahler metric which is a collection of K\"ahler metrics $g^T_\alpha$ on $V_\alpha\subset \mathbb{C}^m$ with $\alpha\in A$ (we will recall the definition of transverse K\"ahler metrics in Section \ref{sec prelim} with more details). We denote the K\"ahler curvature operator of  $g^T_\alpha$ by $R^T_\alpha$. Let $\rho^\alpha_1\leq\cdots\leq \rho^\alpha_{m^2}$ be the eigenvalues of the K\"ahler curvature operator $R^T_\alpha$ on $V_\alpha$. For $1\leq k\leq m^2-1$, we define 
$$\beta_k:=\frac{1}{m^2}-\frac{1}{m^2}\sqrt{\frac{k}{(m^2-1)(m^2-k)}}$$
and we say $g^T_\alpha \in \Gamma^+_2(\beta_k)$ at $x\in V_\alpha$ if $\Lambda_{\beta_k}\in \Gamma^+_2$ at $x$
 where $\Lambda_{\beta_k}=(\rho_1^\alpha,\cdots,\rho_{m^2}^\alpha)-\beta_k(T,\cdots, T)$ and $T=\rho^\alpha_1+\cdots \rho^\alpha_{m^2}$. Namely, the following is true at $x$:
\begin{align}
  &\sum_{1 \leq j \leq m^2} (\rho^\alpha_j-\beta_kT)>0 \text{ and } \label{1}\\
&\sum_{1\leq i< j\leq m^2} (\rho^\alpha_i-\beta_kT)(\rho^\alpha_j-\beta_kT)>0\label{2}.  
\end{align}
 If $g^T_\alpha \in \Gamma^+_2(\beta_k)$ for all $x$ in $V_\alpha$ then we say $g^T_\alpha \in \Gamma^+_2(\beta_k)$ in $V_\alpha$.
 
Inspired by 
Theorem \ref{YZthm5} and Theorem \ref{YZthm6}, we prove the following two theorems in this paper.

\begin{theorem}\label{thm 1}
    Let $(M,g)$ be a compact Sasaki manifold of dimension $2m+1$. If $g^T_\alpha \in \Gamma^+_2(\beta_2)$ in $V_\alpha$ for any $\alpha \in A$, then $\pi_1(M)$ is finite and  the universal cover of $(M,g)$ is isomorphic to a weighted Sasaki sphere with a simple metric.
\end{theorem}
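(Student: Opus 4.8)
The plan is to deform $g$ by the normalized transverse Sasaki--Ricci flow and to show that the hypothesis $g^T_\alpha\in\Gamma_2^+(\beta_2)$ is preserved along it, so that the flow converges to a Sasaki metric whose transverse K\"ahler metric has constant positive transverse holomorphic sectional curvature; the classification of such metrics then identifies the universal cover of $M$ with a weighted Sasaki sphere carrying a simple metric and forces $\pi_1(M)$ to be finite. The preliminary step is to extract positivity of the basic first Chern class from the curvature hypothesis. Since $\beta_2<1/m^2$, inequality \eqref{1} is precisely positivity of the transverse scalar curvature, and together with \eqref{2}, after a pointwise analysis of the cone $\Gamma_2^+(\beta_2)$ in the spirit of \cite{YangZhang}, one obtains that $c_1^B(M)$ is positive, i.e. $(M,g)$ is transversely Fano. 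After a $D$-homothetic transformation we may assume the normalized Sasaki--Ricci flow $\partial_t g^T=-\mathrm{Ric}^T+g^T$ stays in a fixed basic cohomology class, so this is the flow to run.

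Next I would prove the invariance of $\Gamma_2^+(\beta_2)$ along the flow. Along the Sasaki--Ricci flow all the relevant quantities are basic and the transverse Levi-Civita connection behaves like a genuine K\"ahler connection, so the transverse K\"ahler curvature operators $R^T_\alpha$ evolve by an equation of the same shape, with the same algebraic reaction term, as the K\"ahler curvature operator under the K\"ahler--Ricci flow. Hence, by a Hamilton-type maximum principle for basic tensors along the Sasaki--Ricci flow of the kind developed by He and Sun, preservation of the cone reduces to the purely algebraic fact, established by Yang and Zhang in the course of proving Theorem \ref{YZthm5}, that $\Gamma_2^+(\beta_2)$ is invariant under the associated reaction ODE. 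In particular the flow keeps whatever definite positivity the cone condition encodes, e.g. nonnegativity of the transverse orthogonal bisectional curvature, which is what is needed to control the flow for large time.

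The third step is convergence and identification. With positivity preserved, the convergence theory for the normalized Sasaki--Ricci flow on transversely Fano Sasaki manifolds (see \cite{HeSunAdv}) gives all-time existence and convergence, after passing to a subsequence and then in full, to a Sasaki metric $g_\infty$ with $\mathrm{Ric}^T_\infty=g^T_\infty$: a transverse K\"ahler--Einstein metric whose curvature operators still lie in $\overline{\Gamma_2^+(\beta_2)}$. A transverse K\"ahler--Einstein metric satisfying this closed cone condition must, by the rigidity analysis that is the Sasaki counterpart of the equality discussion in \cite{YangZhang}, have constant positive transverse holomorphic sectional curvature. Finally, a compact Sasaki manifold with constant positive transverse holomorphic sectional curvature is, up to a $D$-homothetic transformation and a finite cover, a weighted Sasaki sphere with a simple metric; this is the structure result underlying Theorem \ref{thm He--sun positive}, and because $S^{2m+1}$ is simply connected it yields both conclusions of the theorem.

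The main difficulty lies in the transverse bookkeeping of the last two steps: one must verify that the transverse K\"ahler curvature operator along the Sasaki--Ricci flow really obeys, term by term, the same evolution equation as the K\"ahler curvature operator under the K\"ahler--Ricci flow, so that Yang and Zhang's ODE-invariance of $\Gamma_2^+(\beta_2)$ can legitimately be fed into a maximum principle for basic tensors; and one must show that the $\Gamma_2^+(\beta_2)$ hypothesis, which is strictly weaker than the positive transverse bisectional curvature used by He and Sun, still propagates enough positivity and curvature control for the flow to converge and for the transverse Einstein limit to be forced into constant holomorphic sectional curvature. The possibly irregular Reeb foliation, for which there is no transverse K\"ahler orbifold on which to argue directly, is precisely the reason one cannot simply invoke Theorem \ref{YZthm5} and must work with the flow on $M$ itself.
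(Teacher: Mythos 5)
Your proposal reproves from scratch machinery that the paper simply cites, and in doing so it leaves several load-bearing steps unestablished. The key idea you are missing is that no flow argument is needed at the level of this theorem: the hypothesis $g^T_\alpha\in\Gamma_2^+(\beta_2)$ implies, \emph{pointwise and purely algebraically}, that $\rho^\alpha_1+\rho^\alpha_2>0$ (this is Yang--Zhang's Theorem 2.7, quoted as Lemma \ref{lemma}), i.e.\ the transverse K\"ahler curvature operator is $2$-positive; and a $2$-positive K\"ahler curvature operator has positive orthogonal bisectional curvature (Yang--Zhang, Lemma 2.10). Positive transverse \emph{orthogonal} bisectional curvature is exactly the hypothesis of Theorem \ref{thm He--sun positive} in the form refined by Huang, which already covers irregular Reeb foliations and delivers both conclusions. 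So your assertion that the cone condition is ``strictly weaker than the positive transverse bisectional curvature used by He and Sun'' and therefore forces one to rerun the flow on $M$ is the crux of the misdirection: the condition is strong enough to feed directly into the existing classification, and the Sasaki--Ricci flow is hidden inside that black box rather than something you must control yourself.

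Beyond the misdirection, the steps you do sketch have genuine gaps. The preservation of $\Gamma_2^+(\beta_2)$ along the (transverse) Sasaki--Ricci flow is asserted, not proved, and you acknowledge that matching the evolution equation of $R^T_\alpha$ term by term with the K\"ahler--Ricci flow reaction term is open in your write-up. The convergence theory you invoke from \cite{HeSunAdv} is established under positive transverse bisectional curvature, not under your cone hypothesis, so citing it is circular unless you first derive a positivity condition of that strength --- which, as above, you can, but then the flow becomes unnecessary. Finally, the claim that a transverse K\"ahler--Einstein limit lying in $\overline{\Gamma_2^+(\beta_2)}$ must have constant positive transverse holomorphic sectional curvature is not what Yang--Zhang prove (their conclusion in Theorem \ref{YZthm5} is biholomorphism type, obtained via orthogonal bisectional curvature positivity, not metric rigidity), and it is not justified here.
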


\begin{theorem}\label{thm 2}
    Let $(M,g)$ be a compact Sasaki manifold of dimension $2m+1$. If $g^T_\alpha \in \overline{\Gamma}^+_2(\beta_2)$ in $V_\alpha$ for any $\alpha \in A$, then one of the following holds.
\begin{enumerate}
    \item The universal cover of $(M,g)$ is isomorphic to a weighted Sasaki sphere with a simple metric.
    
    \item  The universal cover of $(M,g)$ is isomorphic to  $\mathbb{R}^{2m+1}$ with the standard Sasaki structure.

      \item $M$ is quasi-regular, and the universal cover of the quotient orbifold $M/F_{\xi}$, where $F_{\xi}$ is the Reeb foliation, is isomorphic to $W\mathbb{P}^1\times W\mathbb{P}^1 $ or  $W\mathbb{P}^m$.
\end{enumerate}    
\end{theorem}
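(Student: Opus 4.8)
The plan is to reduce to the case of nonnegative transverse holomorphic bisectional curvature, invoke the Sasaki generalized Frankel theorem of He and Sun (Theorem \ref{thm He--Sun nonnegative}), and then narrow down its list of possibilities to the three cases in the statement by exploiting the pinching hypothesis, in the same way that Yang and Zhang refine the Kähler picture in their proof of Theorem \ref{YZthm6}. The first step is pointwise linear algebra. Fix $\alpha\in A$ and $x\in V_\alpha$, let $\rho^\alpha_1\le\cdots\le\rho^\alpha_{m^2}$ be the eigenvalues of $R^T_\alpha$ at $x$ and $T=\sum_j\rho^\alpha_j$. The hypothesis $g^T_\alpha\in\overline{\Gamma}^+_2(\beta_2)$ says that the shifted vector $\Lambda_{\beta_2}$ satisfies $\sigma_1(\Lambda_{\beta_2})\ge0$ and $\sigma_2(\Lambda_{\beta_2})\ge0$; by the elementary Newton-type eigenvalue estimate used in \cite{YangZhang} these two inequalities bound the smallest eigenvalue $\rho^\alpha_1$ from below by exactly the defect $\beta_2 T$, which is what is needed to conclude that every transverse holomorphic bisectional curvature of $g^T_\alpha$ at $x$ is nonnegative, and strictly positive if the inequalities are strict, which recovers the hypothesis of Theorem \ref{thm 1}. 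Thus $(M,g)$ has nonnegative transverse holomorphic bisectional curvature everywhere.

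Next I would apply Theorem \ref{thm He--Sun nonnegative}: after passing to the universal cover $(\tilde M,\tilde g)$ and performing a transverse homothety if necessary, the transverse Kähler structure of $\tilde M$ splits as a Riemannian product of transverse Kähler factors, each being a flat $\mathbb{C}^k$, an irreducible compact Hermitian symmetric space of rank $\ge 2$, or a $\mathbb{CP}$-factor carrying a Kähler metric of nonnegative bisectional curvature, and outside the case of a single $\mathbb{CP}$-factor the classification already produces the structure in quasi-regular form. Since the curvature operator of a Riemannian product is block diagonal, the conditions \eqref{1} and \eqref{2} for the shifted eigenvalue vector can be tested factor by factor together with the vanishing cross terms, and here I would adapt the case analysis in Yang and Zhang's proof of Theorem \ref{YZthm6}: the inequality $\sigma_2(\Lambda_{\beta_2})\ge 0$ forbids three or more curved factors, forbids a flat factor coexisting with a curved one, and among the remaining possibilities leaves only (i) everything flat, (ii) a single $\mathbb{CP}^m$-factor with a Kähler metric of nonnegative bisectional curvature, and (iii) a product of exactly two factors, each a copy of $\mathbb{CP}^1$ up to scale, i.e.\ the rank-two Hermitian symmetric space $\mathbb{CP}^1\times\mathbb{CP}^1$; every other irreducible Hermitian symmetric space of rank $\ge 2$ is excluded because its curvature spectrum violates $\sigma_2(\Lambda_{\beta_2})\ge 0$.

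It remains to translate each of the three transverse possibilities into a statement about the Sasaki manifold. In case (i), the classification of complete Sasaki manifolds with flat transverse Kähler metric identifies $\tilde M$ with $\mathbb{R}^{2m+1}$ equipped with its standard Sasaki structure, which is conclusion (2). In case (ii), $\tilde M$ is, in the terminology of \cite{HeSunAdv,HeSunIMRN}, a weighted Sasaki sphere with a simple metric, which is conclusion (1); when in addition the Reeb foliation is quasi-regular one records the companion statement that the orbifold universal cover of $M/F_\xi$ is $W\mathbb{P}^m$, as in conclusion (3). In case (iii), the structure is already quasi-regular by Theorem \ref{thm He--Sun nonnegative}, so $M$ is quasi-regular, the transverse metric descends to a Kähler metric satisfying the same pinching on the orbifold $M/F_\xi$, and an orbifold version of Theorem \ref{YZthm6} (equivalently, rerunning the above argument on $M/F_\xi$ together with an orbifold Frankel-type classification) identifies the orbifold universal cover of $M/F_\xi$ with $W\mathbb{P}^1\times W\mathbb{P}^1$; this is conclusion (3).

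I expect two points to be the real obstacles. First, in the case analysis the pinching condition must be checked on the transverse curvature operator of a possibly irregular Sasaki structure, where there is no quotient orbifold to pass to; one has to argue directly on the product transverse Kähler structure of $\tilde M$ produced by He and Sun, or else approximate the irregular Reeb field by quasi-regular ones while keeping the curvature under control, and verifying that the He--Sun splitting is compatible factorwise with the $\Gamma^+_2(\beta_2)$ condition, with the correct bookkeeping of the cross terms, is where most of the work lies. Second, deciding precisely when one lands in conclusion (1) rather than the $W\mathbb{P}^m$ alternative of conclusion (3), that is, when $\tilde M$ can be identified with a weighted Sasaki sphere directly and when one can only describe $M/F_\xi$ up to orbifold cover, requires a careful analysis of the regularity of the Reeb foliation and of $\pi_1(M)$, going beyond the simply connected Kähler setting of Theorem \ref{YZthm6}.
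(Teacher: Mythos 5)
Your opening reduction is where the argument breaks. You claim that $\sigma_1(\Lambda_{\beta_2})\ge 0$ and $\sigma_2(\Lambda_{\beta_2})\ge 0$ bound the smallest eigenvalue $\rho^\alpha_1$ from below by the shift $\beta_2 T$, and hence that the transverse holomorphic bisectional curvature is nonnegative everywhere. Neither claim holds. Membership in $\overline{\Gamma}^+_2$ does not force every entry of $\Lambda_{\beta_2}$ to be nonnegative (for $N=m^2\ge 4$ the vector $(-1,1,\dots,1)$ lies in $\overline{\Gamma}^+_2$), so you cannot conclude $\rho^\alpha_1\ge\beta_2 T$. What the hypothesis actually yields --- this is Lemma \ref{lemma}, i.e.\ Theorem 2.7 of \cite{YangZhang} --- is a pointwise dichotomy: either $\rho^\alpha_1+\rho^\alpha_2>0$, or $\rho^\alpha_1=\rho^\alpha_2=0$ with the remaining eigenvalues equal and nonnegative. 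In the first branch the curvature operator is merely $2$-positive, which controls the \emph{orthogonal} bisectional curvature (Lemma 2.10 of \cite{YangZhang}) but not the full bisectional curvature, so you are not entitled to invoke Theorem \ref{thm He--Sun nonnegative}, whose hypothesis is nonnegativity of the full transverse bisectional curvature. This is precisely why the paper does not take your route.

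The missing idea is the K\"ahler--Ricci flow step. In the branch where $\rho^\alpha_1+\rho^\alpha_2\ge 0$ everywhere and $>0$ somewhere, the paper runs the K\"ahler--Ricci flow on the transverse charts and uses the strong-maximum-principle results of Ni--Tam and Ni--Wu to propagate $2$-positivity to all of $M$ for $t>0$ (patching chart by chart over a finite subcover); then positive transverse orthogonal bisectional curvature plus Huang's refinement of He--Sun (Theorem \ref{thm He--sun positive}) gives conclusion (1) directly, with no product cases to exclude. Only in the degenerate branch $\rho^\alpha_1=\rho^\alpha_2=0$, $\rho^\alpha_3=\dots=\rho^\alpha_{m^2}\ge 0$ --- where the operator itself is nonnegative --- does one apply Theorem \ref{thm He--Sun nonnegative}; and there the exclusion of factors is not done by testing $\sigma_2(\Lambda_{\beta_2})\ge 0$ against curvature spectra of symmetric spaces, but by the cruder count that a product $M_1\times M_2$ with $\dim_{\mathbb C}M_1=k$ has at most $k^2+(m-k)^2$ nonzero curvature eigenvalues, while this branch forces exactly $m^2-2$ nonzero eigenvalues at some point, whence $k(m-k)\le 1$ and only $W\mathbb P^1\times W\mathbb P^1$ or, in the indecomposable case, $W\mathbb P^m$ survive. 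Your second-stage case analysis is salvageable in spirit, but without the flow argument the nondegenerate branch of the dichotomy is not covered.
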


Here is the plan of this paper. 
In section \ref{sec prelim}, we collect 
some definitions and facts about Sasaki manifolds. In particular, the concept of a weighted Sasaki sphere with a simple metric will be recalled in Subsection \ref{subsec simple metric}.
In section \ref{sec_proof}, 
we prove Theorem \ref{thm 1} and 
Theorem \ref{thm 2} by following the 
arguments in \cite{YangZhang}.

\section{Preliminary and facts about Sasaki manifolds}\label{sec prelim}

We recall briefly here the definition of a Sasaki manifold and its transverse K\"ahler structure. We will follow closely \cite[Sections 2 and 3]{FutakiOnoWang} (see also \cite{MartelliSparksYau}).   A Sasaki manifold is a Riemannian manifold $(M,g)$ whose cone manifold $(\mathbb{R}_+\times M,dr^2+r^2g)$ is K\"ahler. Hence the dimension of $M$ is an odd number $2m+1$. We denote the Reeb vector field on $M$ by $\xi$ and the contact form by $\eta$.

The transverse K\"ahler structure on the Sasaki manifold $(M,g, \xi, \eta)$ is defined as follows. Let $\{U_\alpha\}_{\alpha\in A}$ be an open cover of $M$ and $\pi_\alpha:U_\alpha\to V_\alpha\subset \mathbb{C}^m$ be submersions such that when $U_\alpha \cap U_\beta\neq \emptyset$, $$\pi_\alpha\circ\pi_\beta^{-1}:\pi_\beta(U_\alpha \cap U_\beta)\to \pi_\alpha(U_\alpha \cap U_\beta) $$ is biholomorphic. Let $D=\ker \eta\subset TM$. There is a canonical isomorphism $$d\pi_\alpha : D_p\to T_{\pi_\alpha(p)}V_\alpha, $$
for any $p\in U_\alpha$. The restriction of the Sasaki metric $g$ to $D$ gives a Hermitian metric $g_\alpha^T$ on $V_\alpha$. The Hermitian metric $g_\alpha^T$ is actually K\"ahler. On the intersection $U_\alpha\cap U_\beta$, the metrics $\{g^T_\alpha\}$ are isometric through the maps $\pi_\alpha\circ\pi_\beta^{-1}$. The collection of K\"ahler metrics $\{g^T_\alpha\}$ on $\{V_\alpha\}$ is called a transverse K\"ahler metric.

\subsection{Weighted Sasaki spheres}\label{subsec simple metric}

Let $(\xi, \eta, g)$ be the standard Sasaki structure on the sphere $S^{2m+1}=\{(z_0,\dots, z_m)\in \mathbb{C}^{m+1}:  \sum_{i=0}^m |z_i|^2=1  \}$. The Reeb vector field and the contact form are, respectively, $$\xi=\sum_{i=0}^m (y_i\frac{\partial }{\partial x_i}-x_i\frac{\partial }{\partial y_i}) \text{ and }   \eta=\sum_{i=0}^my_idx_i-x_idy_i,$$ where $z_i=x_i+\sqrt{-1}y_i$. 
The Riemannian metric $g$ is the Euclidean metric of $\mathbb{C}^{m+1}$ restricted to $S^{2m+1}$. A weighted Sasaki sphere with a simple metric is obtained by deforming the standard sphere $(S^{2m+1},g)$ through a type-$I$ deformation and then a transverse K\"ahler deformation which we define below.

For $a=(a_0,\dots, a_m)\in \mathbb{R}^{m+1}_+$, we define $(\xi_a, \eta_a, g_a)$ with weight $a$ on $S^{2m+1}$ by
\begin{align*}
&\xi_a=\sum_{i=0}^m a_i (y_i\frac{\partial }{\partial x^i}-x_i\frac{\partial }{\partial y^i}), \,\,\, \eta_a=\frac{\eta}{\sum_{i=0}^m a_i|z_i|^2},  \\
&g_a(X,Y)=\frac{g\big(X-\eta_a(X)\xi_a,Y-\eta_a(Y)\xi_a\big)}{\sum_{i=0}^m a_i|z_i|^2}+\eta_a(X) \eta_a(Y),
\end{align*}
where $X,Y$ are vector fields on $S^{2m+1}$. This deformation is called a  type-$I$ deformation and is introduced in \cite{Takahashi} (see also \cite[Example 7.1.12]{BoyerGalickibook}).

To define the transverse K\"ahler deformation, we start with a smooth function $\phi$ on $S^{2m+1}$ with zero Lie derivative $\mathcal{L}_{\xi_a} \phi=0$ (such a function is called basic). A transverse K\"ahler deformation of $(S^{2m+1},\xi_a, \eta_a)$ is given by  $(S^{2m+1},\xi_a, \Tilde{\eta} )$
where $\Tilde{\eta}:=\eta_a+2d^c_B\phi$
(for more details about the operator $d^c_B=i(\bar{\partial}_B-\partial_B)/2$, see \cite[Section 4]{FutakiOnoWang}).

\subsection{Positively curved Sasaki manifolds}

In this subsection, we collect important classification theorems in Sasaki geometry due to He and Sun \cite{HeSunAdv}, and refinements due to Huang \cite{HuangHong}.

\begin{theorem}\label{thm He--sun positive}
    Let $(M, g)$ be a compact Sasaki manifold of dimension $2m+1$ with positive transverse orthogonal bisectional curvature. Then $\pi_1(M)$ is finite and  the universal cover of $(M,g)$ is isomorphic to a weighted Sasaki sphere with a simple metric.
\end{theorem}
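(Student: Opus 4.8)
This is the theorem of He and Sun \cite{HeSunAdv}; I only describe the strategy, which is the Sasaki (transverse K\"ahler) analogue of the Ricci-flow proof of the Frankel conjecture of Bando, Mori and Siu--Yau \cite{SiuYau, Mori}, refined so as to need only \emph{orthogonal} bisectional positivity (the K\"ahler prototype being the classification of compact K\"ahler manifolds with positive orthogonal bisectional curvature as $\mathbb{CP}^n$, due to Gu--Zhang, Chen and Wilking); Huang's refinements \cite{HuangHong} also enter in the description of the simple metric. \emph{Step 1 (finiteness of $\pi_1$, reduction to the universal cover).} First I would observe that positive transverse orthogonal bisectional curvature forces positive transverse Ricci curvature when $m\ge 2$ (a Berger-type averaging over a transverse unitary frame), and positive transverse scalar curvature when $m=1$; in particular $c_1^B(M)>0$. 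A transverse Bonnet--Myers estimate — applied directly to the transverse metric, or to the leaf-space orbifold after deforming to a quasi-regular structure — then shows $\pi_1(M)$ is finite, so it suffices to treat the universal cover $\widetilde M$, which is compact, simply connected, Sasaki, and still has positive transverse orthogonal bisectional curvature.

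\emph{Step 2 (Sasaki--Ricci flow and preservation of positivity).} Since $c_1^B(\widetilde M)>0$, I would normalise the transverse K\"ahler class to $2\pi c_1^B$ and run the normalised Sasaki--Ricci flow $\partial_t\omega^T = -\mathrm{Ric}^T + \omega^T$ (Smoczyk--Wang--Zhang), which exists for all $t\ge 0$. The transverse curvature operator evolves by a reaction--diffusion equation whose diffusion term is the basic (transverse) Laplacian and whose reaction term is, chart by chart, identical to the one in the K\"ahler--Ricci flow. Transversalising Hamilton's maximum principle for tensors (via the basic Bochner formula and a maximum principle adapted to the Reeb foliation), together with the algebraic fact that the cone of K\"ahler curvature operators with positive orthogonal bisectional curvature is invariant under the associated Hamilton ODE, one concludes that positive transverse orthogonal bisectional curvature is preserved along the flow.

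\emph{Step 3 (convergence and identification of the limit).} By the convergence theory for the Sasaki--Ricci flow on positive Sasaki manifolds (W. He), the normalised flow converges, after modification by a transverse holomorphic vector field, to a transverse K\"ahler--Ricci soliton on $\widetilde M$ which again has positive transverse orthogonal bisectional curvature. In the quasi-regular case the leaf space of this soliton is a Fano K\"ahler orbifold with positive orthogonal bisectional curvature, and the orbifold version of the K\"ahler classification identifies it with a weighted projective space $W\mathbb{P}^m$ (with $\mathbb{CP}^m$ in the regular case); the irregular case is reduced to this by approximating with quasi-regular structures together with rigidity. Reconstructing $\widetilde M$ as the unit circle (orbi-)bundle of the associated line bundle, and reading off the Reeb/soliton data as a weight vector $a$ and the residual transverse potential as a transverse K\"ahler deformation, one sees that $\widetilde M$ arises from the round sphere by a type-$I$ deformation followed by a transverse K\"ahler deformation, i.e. it is a weighted Sasaki sphere with a simple metric.

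\emph{Main obstacle.} The hard part is Step 2. The diffusion operator is only transversally elliptic, so the maximum principle must be implemented along the Reeb foliation using basic forms; more seriously, the cone of curvature operators with positive \emph{orthogonal} (rather than full) bisectional curvature is considerably more delicate to keep invariant under the reaction ODE than the full bisectional cone, and this algebraic input — which is precisely why the conclusion is $\mathbb{CP}^m$/$W\mathbb{P}^m$ and not a general Hermitian symmetric space — is the technical heart of the argument. A secondary difficulty is the irregular case in Step 3, where there is no leaf-space orbifold, so the identification with a weighted projective space must be carried out at the level of the transverse holomorphic structure and its automorphism group, or circumvented by the quasi-regular approximation above.
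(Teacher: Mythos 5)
This theorem is not proved in the paper at all: it is imported as background, with the attribution ``proved by He and Sun for positive transverse bisectional curvature in \cite{HeSunAdv}, and extended to the positive transverse \emph{orthogonal} bisectional curvature by Huang in \cite{HuangHong}.'' So there is no in-paper argument to compare your proposal against, and your decision to treat the statement as an external citation and merely outline the strategy of He--Sun and Huang is the appropriate reading; your outline (reduce to the universal cover, run the Sasaki--Ricci flow transversally, preserve the positivity cone by a Hamilton-type maximum principle adapted to the Reeb foliation, identify the limit with a weighted projective space on the leaf space and reconstruct the weighted Sasaki sphere, handling the irregular case by quasi-regular approximation) is a fair reconstruction of the kind of argument those papers carry out, and you correctly isolate the genuinely hard points (invariance of the \emph{orthogonal} bisectional cone, and the irregular case).

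One concrete mathematical caveat in your sketch: in Step 1 you assert that positive transverse orthogonal bisectional curvature ``forces positive transverse Ricci curvature when $m\ge 2$ (a Berger-type averaging over a transverse unitary frame).'' This does not follow pointwise. In a transverse unitary frame with $e_1=X$, one has $\mathrm{Ric}^T(X,\bar X)=R^T(X,\bar X,X,\bar X)+\sum_{i\ge 2}R^T(X,\bar X,e_i,\bar e_i)$; the orthogonal hypothesis controls only the second sum and says nothing about the transverse holomorphic sectional curvature term, which may be very negative. Already in the K\"ahler model the fact that positive orthogonal bisectional curvature forces the manifold to be Fano (Gu--Zhang) is a theorem requiring global input, not a pointwise averaging identity, so the finiteness of $\pi_1$ cannot be dispatched by a transverse Bonnet--Myers argument alone as you suggest. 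This does not affect the truth of the theorem, but if you intend the sketch to be faithful you should route Step 1 through the positivity of the basic first Chern class as established in \cite{HeSunAdv} and \cite{HuangHong} rather than through a pointwise Ricci bound.
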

Theorem \ref{thm He--sun positive} is proved by He and Sun for positive transverse bisectional curvature in \cite{HeSunAdv}, and extended to the positive transverse \textit{orthogonal} bisectional curvature by Huang in \cite{HuangHong}. For the nonnegative case, He and Sun in \cite[Theorems 1.2]{HeSunIMRN} prove:
\begin{theorem}\label{thm He--Sun nonnegative}
    If $(M,g)$ is a compact Sasaki manifold with nonnegative transverse bisectional curvature, then one of following is true.
  \begin{enumerate}
      \item $M$ is irregular, $\pi_1(M)$ is finite, and the universal cover of $(M,g)$ is isomorphic to a weighted Sasaki sphere with a simple metric of nonnegative transverse bisectional curvature.
    
      \item $M$ is quasi-regular, and the universal cover of the quotient orbifold $M/F_{\xi}$, where $F_{\xi}$ is the Reeb foliation, is isomorphic to $$(W\mathbb{P}^{n_1}, \omega_1)\times \cdots \times (W\mathbb{P}^{n_j},\omega_j)\times (O_1,h_1)\times \cdots \times (O_k,h_k)\times (\mathbb{C}^l,h_0)$$
      where $\omega_1 \sim \omega_j$ are K\"ahler metrics on the weighted projective spaces with nonnegative bisectional curvature, $h_1\sim h_k$ are the canonical metrics on the compact irreducible Hermitian symmetric spaces of rank $\geq 2$, and $h_0$ is the flat metric on $\mathbb{C}^l$.
  \end{enumerate}  
\end{theorem}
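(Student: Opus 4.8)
The plan is to run the transverse analogue of the K\"ahler--Ricci flow and extract a holonomy reduction of the transverse K\"ahler structure, in direct parallel with Mok's proof of the generalized Frankel conjecture. First I would introduce the Sasaki--Ricci flow, which evolves only the basic (transverse) part of the structure, keeping the Reeb field $\xi$ and the transverse holomorphic foliation fixed and deforming the transverse K\"ahler form by its transverse Ricci form. The first technical step is to show that nonnegative transverse bisectional curvature is preserved along this flow. This is the transverse counterpart of the Bando--Mok theorem: one writes the evolution equation for the transverse curvature operator in terms of the basic Laplacian and applies a maximum principle for basic symmetric tensors, using that the curvature reaction term preserves the cone of nonnegatively curved transverse curvature operators.

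Next I would apply a transverse version of Hamilton's strong maximum principle to the evolving transverse curvature operator. The resulting dichotomy is the heart of the argument, and it is a structural statement about the given metric: either the transverse bisectional curvature becomes strictly positive for $t>0$, or the kernel of the transverse curvature operator is a proper, nonzero, \emph{parallel} (with respect to the transverse Levi--Civita connection) and holonomy-invariant sub-distribution of $D=\ker\eta$. In the first case, applying Theorem~\ref{thm He--sun positive} to the flowed Sasaki structure (which is a transverse K\"ahler deformation of the original, hence has the same $\pi_1(M)$ and the same underlying Reeb foliation) gives that the universal cover is a weighted Sasaki sphere with a simple metric. In the second case, the reduction of the transverse holonomy group forces the transverse K\"ahler structure to split.

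The holonomy reduction then determines the conclusion according to the type of the Reeb flow. If $M$ is irregular, I would show that a nontrivial transverse splitting is incompatible with the dense closure of the Reeb orbits: $\xi$ is a transverse holomorphic Killing field, and its compatibility with a product decomposition of the transverse structure forces quasi-regularity. Hence an irregular $M$ admits no nontrivial splitting, the transverse bisectional curvature is strictly positive, and Theorem~\ref{thm He--sun positive} yields conclusion~(1). If instead $M$ is quasi-regular, the leaf space $M/F_\xi$ is a compact K\"ahler orbifold carrying a metric of nonnegative bisectional curvature, and the transverse holonomy reduction descends to a de Rham--type splitting of the universal cover of this orbifold. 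Classifying the factors exactly as in Mok's theorem --- positive factors are weighted projective spaces $W\mathbb{P}^{n_i}$ with nonnegatively curved K\"ahler metrics, flat factors assemble into $(\mathbb{C}^l,h_0)$, and irreducible locally symmetric factors of rank $\geq 2$ are compact Hermitian symmetric spaces $(O_j,h_j)$ --- produces conclusion~(2).

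The main obstacle is running this entire flow and holonomy argument transversally rather than on a global K\"ahler manifold. In the irregular case the leaf space is not even an orbifold, so there is no ambient compact K\"ahler manifold to which Mok's theorem or Hamilton's strong maximum principle can be applied directly; every ingredient --- preservation of nonnegative transverse bisectional curvature, the strong maximum principle producing a parallel null distribution, and the de Rham splitting --- must be formulated intrinsically in terms of basic tensors and the transverse connection, and one must verify that the Reeb foliation is compatible with the resulting decomposition. The second delicate point is the rigidity that separates the two cases: showing that a nontrivial transverse product structure cannot coexist with an irregular Reeb flow, so that the irregular Sasaki manifolds are precisely the strictly positively curved (weighted Sasaki sphere) ones of conclusion~(1), while the genuinely split case is quasi-regular and yields conclusion~(2).
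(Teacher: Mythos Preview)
The paper does not give its own proof of this statement: Theorem~\ref{thm He--Sun nonnegative} is quoted verbatim as \cite[Theorem 1.2]{HeSunIMRN} and used as a black box in the proof of Theorem~\ref{thm 2}. So there is no in-paper argument to compare your proposal against.

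That said, your outline is a reasonable reconstruction of He--Sun's strategy in \cite{HeSunIMRN}: they do run the Sasaki--Ricci flow, establish the transverse Bando--Mok invariance of nonnegative transverse bisectional curvature, use a transverse strong maximum principle to obtain a parallel null distribution, and then carry out a transverse de Rham--type splitting combined with Mok's classification on the factors. One point where your sketch diverges from the actual logical organization of the theorem is the case separation. The dichotomy in the statement is \emph{irregular versus quasi-regular}, not ``strictly positive after flowing'' versus ``nontrivial kernel''. A quasi-regular $M$ whose transverse curvature becomes strictly positive under the flow does not fall under conclusion~(1); it falls under conclusion~(2) with $j=1$, $k=0$, $l=0$, i.e.\ a single weighted projective space. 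So your proposed rigidity step --- ``a nontrivial transverse product structure cannot coexist with an irregular Reeb flow'' --- is the right ingredient, but it has to be combined with the observation that an irregular $M$ cannot even support the trivial product (a single quasi-regular factor would already force quasi-regularity), which is what makes the irregular case collapse entirely to the weighted-sphere conclusion.
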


%\begin{theorem}\label{quasi positive}
%If $(M,g)$ is a compact Sasaki manifold with quasi-positive transverse orthogonal bisectional curvature, then $\pi_1(M)$ is finite and  the universal cover of $(M,g)$ is a weighted Sasaki sphere with a simple metric.
%\end{theorem}

\section{Proof of Theorems \ref{thm 1} and \ref{thm 2}}\label{sec_proof}

Let $(M,g)$ be a Sasaki manifold of dimension $2m+1$. Recall that the metric $g$ induces a transverse K\"ahler metric $\{g^T_\alpha\}_{\alpha\in A}$. The K\"ahler curvature operator of  $g^T_\alpha$ is denoted by $R^T_\alpha$. The following is from \cite[Theorem 2.7]{YangZhang}.
\begin{lemma}\label{lemma}
    Let $\rho^\alpha_1\leq\cdots\leq \rho^\alpha_{m^2}$ be the eigenvalues of the K\"ahler curvature operator $R^T_\alpha$ on $V_\alpha$. For $1\leq k\leq m^2-1$,

\begin{enumerate}
    \item If $g^T_\alpha \in \Gamma^+_2(\beta_k)$ at $x\in V_\alpha$, then $\rho^\alpha_1+\cdots +\rho^\alpha_k>0$ at $x$.
    \item If $g^T_\alpha \in \overline{\Gamma}^+_2(\beta_k)$ at $x\in V_\alpha$, then one of the following holds:
\begin{enumerate}
    \item $\rho^\alpha_1+\cdots +\rho^\alpha_k>0$ at $x$.
    \item $\rho^\alpha_1=\cdots =\rho^\alpha_k=0$ and $\rho^\alpha_{k+1}=\cdots =\rho^\alpha_{m^2}\geq 0$ at $x$.
\end{enumerate}
\end{enumerate}
\end{lemma}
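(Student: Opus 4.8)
The plan is to reduce the statement to an elementary inequality about $m^2$ real numbers, proved via Cauchy--Schwarz. Fix a point $x\in V_\alpha$, write $N=m^2$ (note $1\le k\le N-1$, so $N-k\ge 1$), and abbreviate $\rho_j=\rho_j^\alpha$ and $T=\rho_1+\cdots+\rho_N$. By definition, $g^T_\alpha\in\Gamma^+_2(\beta_k)$ at $x$ is exactly the pair of inequalities \eqref{1} and \eqref{2}, i.e. $\sigma_1(\Lambda_{\beta_k})>0$ and $\sigma_2(\Lambda_{\beta_k})>0$, where $\Lambda_{\beta_k}=(\rho_1-\beta_kT,\dots,\rho_N-\beta_kT)$. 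The first inequality reads $(1-N\beta_k)T>0$; since $1-N\beta_k=\sqrt{k/((N-1)(N-k))}>0$, it is equivalent to $T>0$ (and, for $\overline{\Gamma}^+_2(\beta_k)$, the relaxed version gives $T\ge 0$). As both the hypotheses and the conclusion $\rho_1+\cdots+\rho_k>0$ are homogeneous in $(\rho_1,\dots,\rho_N)$, one may keep $T$ as a free parameter throughout (or normalize $T=1$).

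Next I would recast the second inequality. Using $2\sigma_2(\Lambda_{\beta_k})=\sigma_1(\Lambda_{\beta_k})^2-\sum_{j}(\rho_j-\beta_kT)^2$ together with $\sum_j\rho_j=T$, a short expansion gives
\[
\sigma_2(\Lambda_{\beta_k})>0 \iff \sum_{j=1}^{N}\rho_j^2< f(\beta_k)\,T^2,\qquad f(\beta):=(1-N\beta)^2+2\beta-N\beta^2 .
\]
A direct computation with the explicit $\beta_k$ yields $f(\beta_k)=\tfrac{1}{N-k}$: writing $N\beta_k=1-s$ with $s^2=k/((N-1)(N-k))$, one finds $f(\beta_k)=1-\tfrac{N-1}{N}(1-s^2)=\tfrac1{N-k}$. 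To prove (1), suppose for contradiction that $\rho_1+\cdots+\rho_k\le 0$. Then $\rho_{k+1}+\cdots+\rho_N=T-(\rho_1+\cdots+\rho_k)\ge T>0$, so Cauchy--Schwarz gives
\[
\sum_{j=1}^{N}\rho_j^2\ \ge\ \sum_{j=k+1}^{N}\rho_j^2\ \ge\ \frac{(\rho_{k+1}+\cdots+\rho_N)^2}{N-k}\ \ge\ \frac{T^2}{N-k},
\]
contradicting the displayed strict inequality; hence $\rho_1+\cdots+\rho_k>0$.

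For (2), the identical computation for $\overline{\Gamma}^+_2(\beta_k)$ gives $T\ge 0$ and $\sum_j\rho_j^2\le T^2/(N-k)$. If $\rho_1+\cdots+\rho_k>0$ we are in case (a). Otherwise $\rho_1+\cdots+\rho_k\le 0$, and the chain of inequalities just displayed becomes a chain of equalities: equality in $\sum_{j=1}^N\rho_j^2\ge\sum_{j=k+1}^N\rho_j^2$ forces $\rho_1=\cdots=\rho_k=0$, and equality in the Cauchy--Schwarz step forces $\rho_{k+1}=\cdots=\rho_N$, whence $\rho_{k+1}=\cdots=\rho_N=T/(N-k)\ge 0$; this is case (b). The only step that is not purely mechanical is recognizing that the constant $f(\beta_k)$ equals $1/(N-k)$ — the value of $\beta_k$ is engineered precisely so that the $\sigma_2$-inequality matches the Cauchy--Schwarz bound forced by ``the sum of the $k$ smallest eigenvalues is $\le 0$'' — together with the bookkeeping of the equality cases in (2). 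Since the statement is quoted from \cite[Theorem 2.7]{YangZhang}, in the paper it would presumably just be cited, but the argument above is self-contained.
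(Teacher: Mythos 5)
Your proof is correct: the computation $f(\beta_k)=1/(N-k)$ checks out, the Cauchy--Schwarz step is applied correctly (using $\rho_{k+1}+\cdots+\rho_N\ge T\ge 0$ to justify squaring), and the equality analysis in part (2) correctly yields $\rho_1=\cdots=\rho_k=0$ and $\rho_{k+1}=\cdots=\rho_N=T/(N-k)\ge 0$. The paper itself gives no proof --- it simply cites Theorem 2.7 of Yang--Zhang --- so your self-contained argument, which is the natural one the value of $\beta_k$ is designed for, is a welcome addition rather than a deviation.
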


Using Lemma \ref{lemma}, we give the poofs of Theorems \ref{thm 1} and \ref{thm 2}.
\begin{proof}[Proof of Theorem \ref{thm 1}]
    If $g^T_\alpha \in \Gamma^+_2(\beta_2)$ in $V_\alpha$ for any $\alpha \in A$, then the curvature  $\{R^T_\alpha \}_{\alpha\in A}$ is 2-positive by Lemma \ref{lemma}. Hence $R^T_\alpha$ has positive orthogonal bisectional curvature by \cite[Lemma 2.10]{YangZhang}. Therefore, the transverse orthogonal bisectional curvature is positive, and according to Theorem \ref{thm He--sun positive}, the proof is complete.
\end{proof}

\begin{proof}[Proof of Theorem \ref{thm 2}]

If  $g^T_\alpha \in \overline{\Gamma}^+_2(\beta_2)$ for any $\alpha \in A$, then we have the following three cases  by Lemma \ref{lemma}: 
\begin{enumerate}
    \item\label{casea} $\rho^\alpha_1+\rho^\alpha_2 \geq 0$ on $V_\alpha$ for any $\alpha \in A$, and there exists a point $x\in V_{\alpha_0}$ for some $\alpha_0 \in A$ such that $\rho^{\alpha_0}_1(x)+\rho^{\alpha_0}_2(x) > 0$.
    \item\label{caseb} $\rho^\alpha_1=\rho^\alpha_2 =\cdots =\rho^\alpha_{m^2}= 0$ on $V_\alpha$ for any $\alpha \in A$. 
    \item\label{casec} $\rho^\alpha_1=\rho_2^\alpha=0$ and $\rho^\alpha_{3}=\cdots =\rho^\alpha_{m^2}\geq 0$  on $V_\alpha$ for any $\alpha \in A$, and there exists a point $x\in V_{\alpha_0}$ for some $\alpha_0 \in A$ such that $\rho^{\alpha_0}_{3}(x)=\cdots =\rho^{\alpha_0}_{m^2}(x)> 0$. 
\end{enumerate}

In case \ref{casea}, by the results in \cite[Theorem 2.1] {NiTam} and \cite[Corollary 2.3]{NiWu}, if we run the K\"ahler--Ricci flow with the initial metric $g^T_{\alpha_0}$ in $V_{\alpha_0}$, then the K\"ahler curvature operator $R^T_{\alpha_0}(t)$ of the evolved metric $g^T_{\alpha_0}(t)$  is 2-positive in $V_{\alpha_0}$ for $t>0$. Now, for the open cover $\{U_\alpha\}_{\alpha\in A}$ of $M$, we can find a finite subcover $\{U_\alpha\}_{\alpha\in A'}$ since $M$ is compact, and we add $\alpha_0$ into $A'$. Assume $U_\alpha \cap U_{\alpha_0}\neq \emptyset$ for some $\alpha \in A'$. We run the K\"ahler--Ricci flow with initial metric $g^T_\alpha$ in $V_\alpha$. Since the metrics $\{g^T_\alpha\}$ are isometric on the overlaps, the K\"ahler curvature operator of the evolved metric $g^T_\alpha(t)$ is 2-positive in $\pi_\alpha(U_\alpha \cap U_{\alpha_0})$. So, for slightly larger $t$, the K\"ahler curvature operator of $g^T_\alpha(t)$ is 2-positive in $V_\alpha$ by \cite[Theorem 2.1] {NiTam} and \cite[Corollary 2.3]{NiWu}. Since $A'$ is a finite set, we eventually have a transverse K\"ahler metric whose K\"ahler curvature operator is 2-positive, so the transverse orthogonal bisetional curvature is positive by \cite[Lemma 2.10]{YangZhang}. By Theorem \ref{thm He--sun positive},  the universal cover of $(M,g)$ is isomorphic to a weighted Sasaki sphere with a simple metric.

In case \ref{caseb}, the universal cover of $(M,g)$ is isomorphic to $\mathbb{R}^{2m+1}$ with the standard Sasaki structure according to the classification of  Tanno in \cite{Tanno} (see also \cite[Sections 7.3 and 7.4]{Blair}).  

In case \ref{casec}, we claim that there are two situations:
   \begin{enumerate}
       \item The universal cover of $(M,g)$ is isomorphic to a weighted Sasaki sphere with a simple metric.
       
\item The universal cover $\widetilde{M/F_{\xi}}$ of the quotient orbifold $M/F_{\xi}$ is isomorphic to $W\mathbb{P}^1\times W\mathbb{P}^1$ or $W\mathbb{P}^m$.
   \end{enumerate} 
Indeed, the K\"ahler curvature operator is nonnegative, so the transverse bisectional curvature is nonnegative. By Theorem \ref{thm He--Sun nonnegative}, there are two cases. If $M$ is irregular, then the universial cover of $(M,g)$ is isomorphic to a weighted Sasaki sphere with a simple metric. If $M$ is quasi-regular, then the universal cover $\widetilde{M/F_{\xi}}$ of the quotient orbifold $M/F_{\xi}$ is isomorphic to the product of the following: 
\begin{enumerate}
    \item the weighted projective space $W\mathbb{P}^k$ with nonnegative bisectional curvature.
    
    \item compact irreducible Hermitian symmetric space of rank $\geq 2$ with the canonical metric.
    
    \item $\mathbb{C}^k$ with the flat metric.
    \end{enumerate}

Suppose $\widetilde{M/F_{\xi}}$ is isomorphic to a product $M_1\times M_2$ where $\dim_\mathbb{C}M_1=k $. We may assume $\dim_\mathbb{C}M_1\geq \dim_\mathbb{C}M_2$, so $k\geq m-k$. The number of nonzero eigenvalues of the K\"ahler curvature operator of $M_1\times M_2$ is  at most $k^2+(m-k)^2$. Meanwhile, $\widetilde{M/F_{\xi}}$ has $m^2-2$ nonzero eigenvalues at some point, so  $m^2-2\leq k^2+(m-k)^2$. We get $k(m-k)\leq 1$, hence $(m,k)=(2,1)$ and $\dim_\mathbb{C}M_1=\dim_\mathbb{C}M_2=1$. The only compact irreducible Hermitian symmetric space of complex dimension one is $\mathbb{CP}^1$ but it has rank one, so we can rule out this case. Neither $M_1$ or $M_2$ can be $\mathbb{C}$ with the flat metric because $M_1\times M_2$ has $2^2-2=2$ nonzero eigenvalues at some point. Therefore, the only possibility for $\widetilde{M/F_{\xi}}$ is $W\mathbb{P}^1\times W\mathbb{P}^1$.

Suppose $\widetilde{M/F_{\xi}}$ is indecomposable. The case $\mathbb{C}^m$ with the flat metric cannot happen since $\widetilde{M/F_{\xi}}$ has $m^2-2$ nonzero eigenvalues at some point. If $\widetilde{M/F_{\xi}}$ is a compact irreducible Hermitian symmetric space, then $\widetilde{M/F_{\xi}}$ is biholomorphic to $\mathbb{CP}^m$ (see \cite[Page 689, line -2]{YangZhang}). \end{proof}

%Consider the standard Sasaki structure on the  sphere $S^{2m+1}$. The K\"ahler cone is $\mathbb{C}^{m+1}\setminus\{0\}$ with the flat metric $\frac{\sqrt{-1}}{2}\sum_{i=0}^m dz_i\wedge d \bar{z}_i$. The Reeb vector field and the contact form are, respectively,  $$\xi=\sum_{i=0}^m (y_i\frac{\partial }{\partial x_i}-x_i\frac{\partial }{\partial y_i}),\,\,\,  \eta=\sum_{i=0}^my_idx_i-x_idy_i$$  where $z_i=x_i+\sqrt{-1}y_i$. The automorphism group of the Sasaki metric is $U(m+1)$, and we take a maximal torus $\mathbb{T}^{m+1}$ in $U(m+1)$ consisting of diagonal elements. The Lie algebra $\mathbb{t}$ is generated by the elements $\xi_i=y_i\frac{\partial }{\partial x^i}-x_i\frac{\partial }{\partial y^i}$, $i=0\sim m$. For any $\xi'=\sum_{i=0}^m a_i\xi_i$, we have $\eta(\xi')=\sum_{i=0}^m a_i |z_i|^2$. In order for $\eta(\xi')$ to be positive, all $a_i$ have to be positive. The Reeb cone of $\xi$ is therefore $$\mathcal{R}_\xi=\{\xi'\in \mathbb{t}:\eta(\xi')>0\}=\{\sum_{i=0}^m a_i\xi_i: a_i>0 \text{ for all } i\}.$$

\bibliographystyle{amsalpha}
\bibliography{dom}

\end{document}